\newtheorem{thm}{Theorem} [section]
\newtheorem{lem}[thm]{Lemma}
\theoremstyle{definition}
\theoremstyle{remark}
\newtheorem{con}[thm]{Conjecture}
\numberwithin{equation}{section}
\begin{document}
\title{A certain reciprocal power sum is never an integer}
\begin{abstract}
By $(\mathbb{Z}^+)^{\infty}$ we denote the set of all the
infinite sequences $\mathcal{S}=\{s_i\}_{i=1}^{\infty}$ of positive
integers (note that all the $s_i$ are not necessarily distinct and not
necessarily monotonic). Let $f(x)$ be a polynomial of nonnegative
integer coefficients. For any integer $n\ge 1$, one lets
$\mathcal{S}_n:=\{s_1, ..., s_n\}$ and
$H_f(\mathcal{S}_n):=\sum_{k=1}^{n}\frac{1}{f(k)^{s_{k}}}$.
When $f(x)$ is linear, it is proved in [Y.L. Feng, S.F. Hong,
X. Jiang and Q.Y. Yin, A generalization of a theorem of Nagell, 
Acta Math. Hungari, to appear] that for any infinite
sequence $\mathcal{S}$ of positive integers, $H_f(\mathcal{S}_n)$
is never an integer if $n\ge 2$. Now let deg$f(x)\ge 2$. 
Clearly, $0<H_f(\mathcal{S}_n)<\zeta(2)<2$. But it is not 
clear whether the reciprocal power sum $H_f(\mathcal{S}_n)$
can take 1 as its value. In this paper, with the help of a result
of Erd\H{o}s, we use the analytic and $p$-adic method to
show that for any infinite sequence $\mathcal{S}$ of
positive integers and any positive integer $n\ge 2$,
$H_f(\mathcal{S}_n)$ is never equal to 1.
Furthermore, we use a result of Kakeya to show that
if $\frac{1}{f(k)}\le\sum_{i=1}^\infty\frac{1}{f(k+i)}$
holds for all positive integers $k$, then the union set 
$\bigcup\limits_{\mathcal{S}\in (\mathbb{Z}^+)^{\infty}}
\{ H_f(\mathcal{S}_n) | n\in \mathbb{Z}^+ \}$ is dense
in the interval $(0,\alpha_f)$ with
$\alpha_f:=\sum_{k=1}^{\infty}\frac{1}{f(k)}$.
It is well known that $\alpha_f=
\frac{1}{2}\big(\pi \frac{e^{2\pi}+1}{e^{2\pi}-1}-1\big)\approx 1.076674$
when $f(x)=x^2+1$. Our dense result infers that when
$f(x)=x^2+1$, for any sufficiently small
$\varepsilon >0$, there are positive integers $n_1$ and
$n_2$ and infinite sequences $\mathcal{S}^{(1)}$ and
$\mathcal{S}^{(2)}$ of positive integers such that
$1-\varepsilon<H_f(\mathcal{S}^{(1)}_{n_1})<1$ and
$1<H_f(\mathcal{S}^{(2)}_{n_2})<1+\varepsilon$.
\end{abstract}
\author[J.Y. Zhao]{Junyong Zhao}
\address{Mathematical College, Sichuan University, Chengdu 610064, P.R. China}
\email{zhjy626@163.com}
\author[S.F. Hong]{Shaofang Hong$^*$}
\address{Mathematical College, Sichuan University, Chengdu 610064, P.R. China}
\email{sfhong@scu.edu.cn; s-f.hong@tom.com; hongsf02@yahoo.com}
\author[X. Jiang]{Xiao Jiang}
\address{Mathematical College, Sichuan University, Chengdu 610064, P.R. China}
\email{422040631@qq.com}
\thanks{$^*$S.F. Hong is the corresponding author and was supported
partially by National Science Foundation of China Grant \#11771304
and by the Fundamental Research Funds for the Central Universities.}
\keywords{$p$-Adic valuation, dense, infinite series,
isosceles triangle principle, smallest positive root, quadratic congruence}
\subjclass[2000]{Primary 11N13, 11B83, 11B75}
\maketitle

\section{Introduction}
Let $\mathbb{Z}$, $\mathbb{Z}^+$ and $\mathbb{Q}$ be the set
of integers, the set of positive integers and the set of
rational numbers, respectively. Let $n\in \mathbb{Z}^+$.
In 1915, Theisinger \cite{[T]} showed that the $n$-th harmonic
sum $1+\frac{1}{2}+...+\frac{1}{n}$ is never an integer if $n>1$.
In 1923, Nagell \cite{[N]} extended Theisinger's result by showing
that if $a$ and $b$ are positive integers and $n\ge 2$,
then the reciprocal sum $\sum_{i=0}^{n-1}\frac{1}{a+bi}$
is never an integer. Erd\H{o}s and Niven \cite{[EN]} generalized
Nagell's result by considering the integrality of
the elementary symmetric functions of
$\frac{1}{a}, \frac{1}{a+b}, ..., \frac{1}{a+(n-1)b}$.
In the recent years, Erd\H{o}s and Niven's result \cite{[EN]}
was extended to the general polynomial sequence,
see \cite{[CT]}, \cite{[HW]}, \cite{[LHQW]} and \cite{[WH]}.
Another interesting and related topic is presented in \cite{[YHYQ]}.

By $(\mathbb{Z}^+)^{\infty}$ we denote the set of all the
infinite sequence $\{s_i\}_{i=1}^{\infty}$ of positive integers
(note that all the $s_i$ are not necessarily distinct and not
necessarily monotonic). For any given
$\mathcal{S}=\{s_i\}_{i=1}^{\infty}\in (\mathbb{Z}^+)^{\infty}$,
we let $\mathcal{S}_n:=\{s_1, ..., s_n\}.$
Associated to the infinite sequence $\mathcal{S}$ of positive
integers and a polynomial $f(x)$ of nonnegative integer
coefficients, one can form an infinite sequence
$\{H_f(\mathcal{S}_n)\}_{n=1}^{\infty}$ of positive
rational fractions with $H_f(\mathcal{S}_n)$ being defined as follows:
$$H_f(\mathcal{S}_n):=\sum\limits_{k=1}^{n}\frac{1}{f(k)^{s_{k}}}.$$
Very recently, Feng, Hong, Jiang and Yin \cite{[FHJY]} showed
that when $f(x)$ is linear, the reciprocal power sum
$H_f(\mathcal{S}_n)$ is never an integer if $n\ge 2$.
It is natural to ask the following interesting question:
Is the similar result still true when $f(x)$ is of degree
at least two and nonnegative integer coefficients?

In this paper, our main goal is to study this question.
In fact, by using the analytic and $p$-adic method and with
the help of Erd\H{o}s theorem \cite{[E]} on the distribution in
the arithmetic progression $\{4n+1\}_{n=1}^{\infty}$, we will
show the following result that is the first main result of this paper.

\begin{thm}\label{thm1}
Let $f(x)$ be a polynomial of nonnegative integer coefficients
and of degree at least two. Then for any infinite sequence
$\mathcal{S}$ of positive integers and for any positive integer
$n\ge 2$, the reciprocal power sum $H_f(\mathcal{S}_n)$
is never an integer.
\end{thm}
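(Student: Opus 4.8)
The plan is to first squeeze $H_f(\mathcal{S}_n)$ by elementary size estimates, which narrows matters to a single polynomial, and then to close with a $p$-adic argument powered by Erd\H{o}s's theorem. Write $f(x)=\sum_{i=0}^{d}a_ix^i$ with $a_i\ge 0$, $d=\deg f\ge 2$ and $a_d\ge 1$; at once $f$ is strictly increasing on $\mathbb{Z}^+$ and $f(k)\ge a_dk^d\ge k^2$ for every $k\ge 1$. Hence for all $n\ge 2$,
$$0<H_f(\mathcal{S}_n)\le\frac{1}{f(1)}+\sum_{k\ge 2}\frac{1}{k^2}\le 1+(\zeta(2)-1)=\zeta(2)<2,$$
so the only conceivable integer value is $1$, and the whole proof consists in driving the assumption $H_f(\mathcal{S}_n)=1$ to a contradiction. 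If $f(1)=1$, then $\sum_i a_i=1$ forces $f(x)=x^d$, whence $H_f(\mathcal{S}_n)=1+\sum_{k=2}^{n}k^{-ds_k}\in(1,2)$, impossible. So $f(1)\ge 2$, and matching $1=H_f(\mathcal{S}_n)$ against the bound above gives $\frac{1}{f(1)}\ge 2-\zeta(2)>\frac13$, i.e.\ $f(1)=2$. Since $\sum_i a_i=2$ and $a_d\ge 1$, this means $f(x)=2x^d$ or $f(x)=x^d+x^j$ with $0\le j\le d-1$.

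The next step is to dispose of all of these except $x^2+1$ by size alone. If $f=2x^d$, then $H_f(\mathcal{S}_n)\le\sum_{k\ge 1}\frac{1}{2k^2}=\frac12\zeta(2)<1$. If $f=x^d+x^j$ with $d\ge 3$, then $\frac{1}{f(1)^{s_1}}\le\frac12$ and $f(k)\ge k^3$ for $k\ge 2$, so $H_f(\mathcal{S}_n)\le\frac12+(\zeta(3)-1)<1$. If $f=x^2+x$, then $\frac{1}{f(1)^{s_1}}\le\frac12$ and $\sum_{k=2}^{n}\frac{1}{k(k+1)}=\frac12-\frac1{n+1}$, so $H_f(\mathcal{S}_n)<1$. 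Each of these contradicts $H_f(\mathcal{S}_n)=1$, leaving a single assertion to prove: $\sum_{k=1}^{n}(k^2+1)^{-s_k}\ne 1$ for every $n\ge 2$ and every $(s_k)$.

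For this I would clear denominators, rewriting $\sum_{k=1}^{n}(k^2+1)^{-s_k}=1$ as
$$\sum_{k=1}^{n}\ \prod_{\substack{1\le j\le n\\ j\ne k}}(j^2+1)^{s_j}\ =\ \prod_{k=1}^{n}(k^2+1)^{s_k},$$
and then use a prime $p$ that divides \emph{exactly one} of $1^2+1,\dots,n^2+1$, say $p\mid k_0^2+1$: the right-hand side then has $v_p\ge s_{k_0}\ge 1$, while on the left every summand with $k\ne k_0$ is divisible by $(k_0^2+1)^{s_{k_0}}$ and the single summand with $k=k_0$ is coprime to $p$, so $v_p$ of the left-hand side is $0$ — contradiction. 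Such a prime is produced as follows. Because the two roots of $x^2\equiv -1\pmod p$ in $\{1,\dots,p-1\}$ add up to $p$, any prime $p>2n$ dividing $\prod_{k=1}^{n}(k^2+1)$ divides exactly one factor, and to the first power since $p^2>4n^2>n^2+1$. For $n=2$ one may take $p=5$; for all large $n$ such a prime exists by the Erd\H{o}s theorem quoted in the Introduction on the largest prime factor of $\prod_{k\le n}(k^2+1)$ (equivalently, on the primes $\equiv 1\pmod 4$ dividing the sequence $\{k^2+1\}$); and the finitely many remaining $n\ge 4$ are settled by direct inspection (for instance $17$ divides exactly one of $1^2+1,\dots,n^2+1$ for $4\le n\le 12$, and $101$ for $10\le n\le 90$). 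This leaves only $n=3$, where no prime divides exactly one of $2,5,10$; but there $s_1\ge 1$ forces $\frac12\ge\frac{1}{2^{s_1}}=1-\frac{1}{5^{s_2}}-\frac{1}{10^{s_3}}\ge 1-\frac15-\frac1{10}=\frac7{10}$, which is absurd.

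The step I expect to be the real obstacle is precisely this arithmetic input: guaranteeing that $\prod_{k\le n}(k^2+1)$ has a prime factor exceeding $2n$. The elementary inequalities that handle everything else say nothing here — tellingly, for $n=3$ the largest prime factor is only $5<6$, which is exactly why $n=3$ must be argued by hand — so Erd\H{o}s's theorem is indispensable at this point, and the bookkeeping needed to fit the analytic reduction (isolating $x^2+1$ and clearing the small values of $n$) together with this deep fact is the crux of the proof.
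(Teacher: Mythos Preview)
Your reduction to $f(x)=x^2+1$ by size estimates is correct and organized a little differently from the paper, but reaches the same endpoint. The gap is in how you handle $f(x)=x^2+1$.

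You want, for each $n\ge 4$, a prime $p$ dividing \emph{exactly one} of $1^2+1,\dots,n^2+1$, and you obtain it (for large $n$) from ``the Erd\H{o}s theorem quoted in the Introduction on the largest prime factor of $\prod_{k\le n}(k^2+1)$''. That is not the Erd\H{o}s result the paper has at its disposal. The paper's Lemma~\ref{lem2.1} is Erd\H{o}s's Bertrand-type statement for the progression $4m+1$: for every real $\xi\ge 7$ there is a prime $p\equiv 1\pmod 4$ with $\xi<p\le 2\xi$. Taking $\xi=n$ produces a prime $p\equiv 1\pmod 4$ in $(n,2n]$, \emph{not} a prime exceeding $2n$. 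Such a $p$ does divide some $r_p^2+1$ with $r_p\le (p-1)/2<n$, but since $p\le 2n$ the companion root $p-r_p$ may also land in $[1,n]$, so the uniqueness you rely on can fail. A blanket claim that the greatest prime factor of $\prod_{k\le n}(k^2+1)$ exceeds $2n$ is neither stated nor derivable from Lemma~\ref{lem2.1}, and your ad~hoc checks with $17$ and $101$ only cover $n\le 90$; nothing in your argument controls $n\ge 91$.

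The paper closes exactly this gap by accepting that the prime $p\in(n,2n]$ might hit two terms. If only $r_p\le n$ (Case~1), your one-term $p$-adic argument applies verbatim. If both $r_p$ and $p-r_p$ lie in $[1,n]$ (Case~2), set
\[
A=\frac{1}{(r_p^2+1)^{s_{r_p}}}+\frac{1}{((p-r_p)^2+1)^{s_{p-r_p}}}.
\]
When $s_{r_p}\ne s_{p-r_p}$ the isosceles triangle principle gives $v_p(A)=-\max(s_{r_p},s_{p-r_p})<0$ at once. When $s_{r_p}=s_{p-r_p}=:s$, the paper proves the elementary inequality
\[
(r_p^2+1)^s+((p-r_p)^2+1)^s<p^{2s},
\]
so the numerator of $A$ has $p$-adic valuation strictly below $2s$, while the denominator has valuation exactly $2s$ (since $v_p(r_p^2+1)=v_p((p-r_p)^2+1)=1$); hence $v_p(A)<0$ again. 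This two-root case is precisely the missing idea in your proposal.
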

\noindent Clearly, Theorem \ref{thm1} gives an affirmative
answer to the above mentioned question.

Associated to any given infinite sequence $\mathcal{S}$ of
positive integers, we let
$$H_f(\mathcal{S}):=\{H_f(\mathcal{S}_n)|n\in\mathbb{Z}^+\}$$
and
$$\alpha_f(\mathcal{S}):=\sum_{k=1}^{\infty}\frac{1}{f(k)^{s_k}}.$$
Put
$$\alpha_f:=\sum_{k=1}^{\infty}\frac{1}{f(k)}. \eqno(1)$$
Note that $\alpha_f$ may be $+\infty$. Then
$\alpha_f(\mathcal{S})\le\alpha_f$ and
$H_f(\mathcal{S})\subseteq (\inf H_f(\mathcal{S}), \alpha_f(\mathcal{S}))$.
It is clear that $H_f(\mathcal{S})$ is not dense (nowhere dense)
in the interval $(\inf H_f(\mathcal{S}), \alpha_f(\mathcal{S}))$.
However, if we put all the sets $H_f(\mathcal{S})$ together,
then one arrives at the following interesting dense result
that is the second main result of this paper.

\begin{thm}\label{thm2}
Let $f(x)$ be a polynomial of nonnegative integer coefficients and let
$U_f$ be the union set defined by
$$U_f:=\bigcup_{\mathcal{S}\in (\mathbb{Z}^+)^{\infty}}H_f(\mathcal{S}). $$

{\rm (i).} If $\deg f(x)=1$, then $U_f$ is dense in the interval
$(\delta, +\infty)$ with $\delta=1$ if $f(x)=x$, and $\delta=0$
otherwise.

{\rm (ii).} If $\deg f(x)\ge 2$ and
$$\frac{1}{f(k)}\le\sum_{i=1}^\infty\frac{1}{f(k+i)} \eqno(2)$$
holds for all positive integers $k$, then $U_f$ is dense in the
interval $(0, \alpha_f)$ with $\alpha_f$ being given in (1).
\end{thm}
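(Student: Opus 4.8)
The overall plan is to reduce Theorem~\ref{thm2} to a one-dimensional statement — a Kakeya-type theorem describing the set of subsums of the single series $\sum_{k\ge 1}1/f(k)$ — and then to lift an arbitrary such subsum to an admissible value $H_f(\mathcal{S}_n)$ by ``padding'' the unused indices with enormous exponents.

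\emph{Step 1 (the Kakeya input).} First I would isolate the following consequence of Kakeya's theorem on partial sums of a series: if $(a_k)_{k\ge 1}$ is a strictly decreasing sequence of positive reals with $a_k\le\sum_{i>k}a_i$ for every $k$ (the tail being allowed to diverge), then
$$\Big\{\,\sum_{k\in S}a_k\ :\ S\subseteq\mathbb{Z}^+\Big\}=[0,\sigma],\qquad \sigma:=\sum_{k\ge 1}a_k\in(0,+\infty],$$
with the convention $[0,+\infty]:=[0,+\infty)$. Since $f$ has nonnegative coefficients and positive leading coefficient it is strictly increasing on $[1,\infty)$, so $a_k:=1/f(k)$ is strictly decreasing, and condition $(2)$ is precisely the hypothesis $a_k\le\sum_{i>k}a_i$; hence, whenever $(2)$ holds, the subsums of $\sum_k 1/f(k)$ fill $[0,\alpha_f]$. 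The same reasoning applies to the shifted series $\sum_{k\ge 2}1/f(k)$, whose subsums then fill $[0,\alpha_f-1/f(1)]$.

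\emph{Step 2 (padding).} Fix a target $\beta$ and an error $\varepsilon>0$. Using Step~1 choose a finite set $S_0\subseteq\{1,\dots,N\}$ with $\big|\beta-\sum_{k\in S_0}1/f(k)\big|<\varepsilon/2$ (a sufficiently long finite truncation of a set $S$ with $\sum_{k\in S}1/f(k)=\beta$ works, and such $S$ exists by Step~1 since $\beta<\alpha_f$). Define $\mathcal{S}$ by setting $s_k=1$ for $k\in S_0$, choosing for each $k\le N$ with $k\notin S_0$ an exponent $s_k$ so large that $1/f(k)^{s_k}<\varepsilon/(2N)$, and taking $s_k$ arbitrary for $k>N$. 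Then
$$H_f(\mathcal{S}_N)=\sum_{k\in S_0}\frac{1}{f(k)}+\sum_{\substack{k\le N\\ k\notin S_0}}\frac{1}{f(k)^{s_k}}\in\Big(\beta-\tfrac{\varepsilon}{2},\ \beta+\varepsilon\Big),$$
so $H_f(\mathcal{S}_N)\in U_f$ is within $\varepsilon$ of $\beta$. Being required to use every index $1,\dots,N$ costs nothing, since a ``skipped'' index is simply assigned a huge exponent; for index $1$, however, this manoeuvre works only when $f(1)\ge 2$.

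\emph{Step 3 (the three cases).} If $\deg f\ge 2$ and $(2)$ holds, then taking $k=1$ in $(2)$ gives $2/f(1)\le\alpha_f$, and since $f(k)\ge k^2$ we have $\alpha_f\le\sum_{k\ge1}k^{-2}<2$; hence $f(1)\ge 2$ and therefore $f(k)\ge 2$ for all $k$. Every $H_f(\mathcal{S}_n)$ then satisfies $0<H_f(\mathcal{S}_n)<\alpha_f$, and Step~2 applied to an arbitrary $\beta\in(0,\alpha_f)$ shows $U_f$ is dense in $(0,\alpha_f)$. If $\deg f=1$, write $f(x)=ax+b$; then $\alpha_f=+\infty$ and $(2)$ holds automatically because its right-hand side diverges. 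When $f(x)\ne x$ we have $f(1)=a+b\ge 2$, so Step~2 goes through for every $\beta>0$ and $U_f$ is dense in $(0,+\infty)$. When $f(x)=x$ the index $1$ contributes the fixed value $1/1^{s_1}=1$ to every $H_f(\mathcal{S}_n)$, so $U_f\subseteq[1,+\infty)$; applying Steps~1--2 to the shifted divergent-tail series $\sum_{k\ge 2}1/k$ (whose subsums fill $[0,+\infty)$) and adding the forced summand $1$ shows $U_f$ is dense in $(1,+\infty)$. This is exactly the assertion with $\delta=1$ for $f(x)=x$ and $\delta=0$ otherwise.

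\emph{The main obstacle.} The genuinely delicate point is Step~1: identifying the exact threshold — namely condition $(2)$ — that separates the regime in which the subsum set of a monotone series is a full interval from the regime in which it is a Cantor-type set, and observing that in the degree-one case $(2)$ is vacuous, so that the real dichotomy is simply whether $f(1)=1$. Once Step~1 is in place (a short greedy argument recovers the ``if'' half of Kakeya's theorem, which is all we use), the padding of Step~2 and the case analysis of Step~3 are routine; the only other elementary fact needed is that $(2)$ forces $f(1)\ge 2$ whenever $\deg f\ge 2$.
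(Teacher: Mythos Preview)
Your proposal is correct and follows essentially the same route as the paper: reduce to the Kakeya-type statement about subsums of $\sum_k 1/f(k)$ (the paper's Lemmas~2.3--2.5), then pad the unused indices with large exponents, splitting into the cases $f(1)\ge 2$ versus $f(x)=x$. The only noteworthy difference is your argument that $f(1)\ge 2$ when $\deg f\ge 2$: you use $2/f(1)\le\alpha_f\le\zeta(2)<2$ directly, whereas the paper observes that $f(1)=1$ forces $f(x)=x^m$ and then contradicts~(2) via $\sum_{k\ge 2}1/k^m<1$; your version is a touch slicker but the content is the same.
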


Let $H(\mathcal{S}_n):=H_f(\mathcal{S}_n)$ and
$H(\mathcal{S}):=H_f(\mathcal{S})$ if $f(x)=x^2+1$.
It is well known that (see, for instance, \cite{[KKS]})
$$\sum\limits_{k=1}^{\infty}\frac{1}{k^2+1}=
\frac{1}{2}\Big(\pi \frac{e^{2\pi}+1}{e^{2\pi}-1}-1\Big):=\alpha. \eqno(3)$$
Furthermore, $\alpha\approx 1.076674$. Evidently, for any positive
integer $n$, we have
\begin{align*} \label{eq11}
0<H(\mathcal{S}_n)\le \sum\limits_{k=1}^n\frac{1}{k^2+1}
<\sum\limits_{k=1}^\infty\frac{1}{k^2+1}<2.
\end{align*}
Theorem 1.1 tells us that $H(\mathcal{S}_n)$
is never equal to 1 for any infinite sequence $\mathcal{S}$
of positive integers and any positive integer $n$. This extends
the corresponding result in \cite{[LHQW]} and \cite{[YHYQ]}
which states that for the infinite sequence $\mathcal{S}$
with $s_i= s_1$ for all integers $i \ge 1$, $H(\mathcal{S}_n)$
is not equal to 1. On the other hand, one can easily check
that (2) is true when $f(x)=x^2+1$. So Theorem 1.2
infers that for any sufficiently small $\varepsilon >0$,
there are positive integers $n_1$ and $n_2$ and infinite
sequences $\mathcal{S}^{(1)}$ and $\mathcal{S}^{(2)}$
of positive integers such that
$1-\varepsilon<H(\mathcal{S}^{(1)}_{n_1})<1$ and
$1<H(\mathcal{S}^{(2)}_{n_2})<1+\varepsilon$.

This paper is organized as follows. First, in Section 2, we
recall the early results due to Erd\H{o}s \cite{[E]} and Kakeya
\cite{[Ka]}, respectively, and then show some preliminary lemmas
which are needed in the proofs of Theorems \ref{thm1} and \ref{thm2}.
Then in Sections 3 and 4, we supply the proofs of Theorems \ref{thm1}
and \ref{thm2}, respectively. The final section is devoted to
some remarks. Actually, a conjecture on the case of integer
coefficients polynomial is proposed there.

\section{Auxiliary lemmas}

In this section, we present several auxiliary lemmas
that are needed in the proofs of Theorems {\ref{thm1}}
and {\ref{thm2}}. We begin with a well-known result due to Erd\H{o}s.

\begin{lem}\label{lem2.1} \cite{[E]} For any real number $\xi \ge 7$,
there exists a prime $p\in(\xi, 2\xi]$ such that $p \equiv 1 \pmod 4$.
\end{lem}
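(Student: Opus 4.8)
This is Bertrand's postulate restricted to the arithmetic progression $1\bmod 4$, which is precisely the theorem Erd\H{o}s established in \cite{[E]}; the plan is to organise a proof by splitting on the size of $\xi$. For large $\xi$ the natural tool is an explicit form of the prime number theorem for residues modulo $4$: one fixes an explicit constant $X_0\ge 13$ for which the Chebyshev sum $\theta(x;4,1):=\sum_{p\le x,\ p\equiv 1\,(4)}\log p$ satisfies $\bigl|\theta(x;4,1)-\tfrac12 x\bigr|\le\tfrac1{10}x$ for every $x\ge X_0$ (explicit estimates of this shape, with much smaller error, are classical for modulus $4$; see e.g. the tables of Ramar\'e--Rumely). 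Then for all real $\xi\ge X_0$,
\[
\theta(2\xi;4,1)-\theta(\xi;4,1)\ \ge\ \Bigl(\tfrac12-\tfrac1{10}\Bigr)(2\xi)-\Bigl(\tfrac12+\tfrac1{10}\Bigr)\xi\ =\ \tfrac15\,\xi\ >\ 0,
\]
so the interval $(\xi,2\xi]$ must contain a prime $\equiv 1\pmod 4$.

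It then remains to treat $7\le\xi<X_0$, which is a finite verification. Writing $5<13<17<29<\cdots$ for the increasing list of primes $\equiv 1\pmod 4$, the least such prime exceeding a given $\xi$ is constant on each gap between consecutive list members, so the assertion ``$(\xi,2\xi]$ contains a prime $\equiv 1\pmod4$ for all $7\le\xi<X_0$'' is equivalent to the two facts: $13\le 2\cdot 7$ (this covers $7\le\xi<13$; note the jump from $5$ to $13$ forces the hypothesis $\xi\ge 7$ to be essentially sharp, since $\xi=6$ already fails), together with $q'<2q$ for every pair of consecutive primes $q<q'$ in the progression with $13\le q<q'\le 2X_0$. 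One checks this by inspection of a table of primes $\equiv 1\pmod 4$: the ratios $\tfrac{17}{13},\ \tfrac{29}{17},\ \tfrac{37}{29},\ \tfrac{41}{37},\dots$ are already well below $2$, and confirming that all such ratios up to $2X_0$ stay below $2$ is a routine finite computation.

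The only real obstacle is quantitative: one needs $X_0$ small enough that the finite table check is genuinely feasible, which is the delicate point. This is exactly why Erd\H{o}s's original argument \cite{[E]} avoids invoking deep analytic input and instead runs an elementary combinatorial estimate in the spirit of his proof of Bertrand's postulate: it analyses the prime factorisation of an auxiliary product such as $\prod_{k\le n}(k^2+1)$, every odd prime factor of which lies in the progression $1\bmod 4$, controlling the relevant $p$-adic valuations by counting the solutions of $k^2\equiv-1\pmod{p^j}$ via Legendre's formula, and comparing this with Stirling-type lower bounds for the product. If a fully self-contained proof with an explicit small bound is wanted, I would follow that route; in either case the case $\xi\ge 7$ of the lemma follows.
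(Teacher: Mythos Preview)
The paper does not prove this lemma at all: it is quoted verbatim as Erd\H{o}s's result \cite{[E]} and used as a black box in the proof of Lemma~\ref{lem2.2}. There is therefore no ``paper's own proof'' to compare against.

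Your outline is a sound sketch of how such a result is established---an explicit Chebyshev-type bound for $\theta(x;4,1)$ handles large $\xi$, and a finite table of consecutive primes $\equiv 1\pmod 4$ with ratios below $2$ covers the initial segment---and you correctly identify both the sharpness of the threshold $\xi\ge 7$ (the jump $5\to 13$) and the alternative elementary route via $\prod_{k\le n}(k^2+1)$ that Erd\H{o}s actually took. As written it remains a proof plan rather than a proof, since the explicit value of $X_0$ and the corresponding finite check are not carried out; but since the paper merely cites the lemma, nothing more is required here.
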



For any given prime $p$ with $p \equiv 1 \pmod 4$,
the congruence $x^2+1\equiv 0 \pmod p$ is
solvable, and in the remaining part of this paper,
we use $r_p$ to stand for the smallest positive root of
$x^2+1\equiv 0 \pmod p$. In the conclusion of this section,
we use Lemma \ref{lem2.1} to show the following result that
is vital in the proof of Theorem \ref{thm1}.

\begin{lem}\label{lem2.2} For any integer $n\ge 2$, there is
a prime $p$ with $p\equiv 1\pmod 4$ such that $r_p\le n<p$.
\end{lem}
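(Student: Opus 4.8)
The plan is to use Lemma~\ref{lem2.1} (Erd\H{o}s' result) to locate a prime $p\equiv 1\pmod 4$ in a carefully chosen dyadic interval, and then to bound the smallest positive root $r_p$ of $x^2+1\equiv 0\pmod p$ from above by $n$. The key observation is that $r_p$ and $p-r_p$ are the two roots of $x^2+1\equiv 0\pmod p$ in the range $[1,p-1]$, so $r_p<p/2<p$; the nontrivial part is to force $r_p\le n$, and for this it is natural to split the argument according to the size of $n$.

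For the first range, say $n\ge 7$ (or whatever small bound makes Lemma~\ref{lem2.1} applicable, e.g. $n\ge 7$), I would apply Lemma~\ref{lem2.1} with $\xi=n$ to obtain a prime $p$ with $n<p\le 2n$ and $p\equiv 1\pmod 4$. This already gives $n<p$. It then remains to check $r_p\le n$. Since $r_p^2\equiv -1\pmod p$ and $1\le r_p\le p-1$, either $r_p\le p/2$ or $p-r_p\le p/2$; replacing $r_p$ by the smaller of the two roots (which is exactly what "smallest positive root" means) gives $r_p\le (p-1)/2$. Now $(p-1)/2\le (2n-1)/2<n$, so $r_p<n\le n$, as desired. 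Thus for $n\ge 7$ the prime produced by Lemma~\ref{lem2.1} does the job directly.

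For the finitely many small cases $2\le n\le 6$, I would simply exhibit a suitable prime by hand. For instance $p=5$ has $r_5=2$ (since $2^2+1=5\equiv 0$), which handles $n=2,3,4$ because $r_5=2\le n<5$; and $p=13$ has $r_{13}=5$ (since $5^2+1=26\equiv 0\pmod{13}$), which handles $n=5,6$ because $5\le n<13$. One should double-check that $5$ really is the smallest positive root mod $13$ (the roots are $5$ and $8$), and similarly that $2$ is the smallest root mod $5$ (the roots are $2$ and $3$); these are trivial verifications. Assembling the two ranges covers all $n\ge 2$.

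The main obstacle is purely bookkeeping: making sure the threshold in Lemma~\ref{lem2.1} ($\xi\ge 7$) is reconciled with the claim's range ($n\ge 2$), and making sure the inequality $(p-1)/2<n$ genuinely yields $r_p\le n$ rather than just $r_p<p$. Both are handled by the dyadic bound $p\le 2n$ together with the elementary fact that of the two roots of $x^2+1\pmod p$ at least one lies in $[1,(p-1)/2]$. No deep input beyond Lemma~\ref{lem2.1} is needed, so once the small cases are tabulated the proof is short.
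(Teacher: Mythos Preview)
Your proposal is correct and follows essentially the same approach as the paper: the same case split at $n=7$, the same explicit primes $p=5$ (handling $n=2,3,4$) and $p=13$ (handling $n=5,6$) for the small cases, and the same use of Lemma~\ref{lem2.1} with $\xi=n$ together with the bound $r_p\le (p-1)/2<n$ for $n\ge 7$.
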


\begin{proof}
If $n = 2, 3$ or 4, then letting $p = 5$ gives us that $r_p = 2$.
So Lemma \ref{lem2.2} is true in this case.

If $n=5$ or 6, then picking $p = 13$ gives us that $r_p = 5$.
Lemma \ref{lem2.2} holds in this case.

Now let $n \ge 7$. At this moment, Lemma \ref{lem2.1} guarantees
the existence of a prime $p$ such that $p\equiv 1\pmod 4$
and $n < p < 2n$. Since $p-r_p$ is another positive root of
$x^2+1\equiv 0 \pmod p$ and $r_p<p-r_p$, it follows that
$$r_p\le\frac{p-1}{2}<\frac{p}{2}<n<p.$$
as required. Hence Lemma \ref{lem2.2} is proved.
\end{proof}

Now let us state a result obtained by Kakeya in 1914.

\begin{lem}\label{lem2.3} \cite{[Ka]}
Let $\sum_{k=1}^{\infty}a_k$ be an absolutely convergent
infinite series of real numbers and let the set, denoted by $SPS$,
of all the partial sums of the series $\sum_{k=1}^{\infty}a_k$ be
defined by
$$SPS:=\Big\{\sum\limits_{i=1}^{m}a_{k_i} \Big| m\in
\mathbb{Z}^+\cup \{ \infty \}, 1\le k_1<...<k_m \Big\}.$$
Let $u:=\inf SPS$ and $v:=\sup SPS$ (note that $u$ may
be $-\infty$ and $v$ may be $+\infty$). Then the set $U$
consists of all the values in the interval $(u, v)$
if and only if
$$|a_k|\le \sum_{i=1}^\infty|a_{k+i}|$$
holds for all $k\in \mathbb{Z}^+$.
\end{lem}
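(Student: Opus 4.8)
The plan is to strip the signs, reduce to a non-negative series, and then prove the two implications separately: the ``if'' by a greedy selection valid in the given order, and the ``only if'' by exhibiting an explicit gap after passing to the non-increasing rearrangement. To remove the signs, set $P=\{k:a_k>0\}$, $N=\{k:a_k<0\}$ and $b_k=|a_k|$, so that $\sum_{k\ge1}b_k=:B<\infty$ by absolute convergence. Choosing all positive terms gives $v=\sup SPS=\sum_{k\in P}a_k$ and choosing all negative terms gives $u=\inf SPS=\sum_{k\in N}a_k$, whence $v-u=B$. The device is the bijection $A\mapsto(A\cap P)\cup(N\setminus A)$ of the power set of $\mathbb{Z}^+$, under which $\sum_{k\in A}a_k-u=\sum_{k\in(A\cap P)\cup(N\setminus A)}b_k$; hence $SPS=u+SPS_b$, where $SPS_b$ denotes the subsum set of $\sum b_k$. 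Since the hypothesis $|a_k|\le\sum_{i\ge1}|a_{k+i}|$ is literally $b_k\le\sum_{i\ge1}b_{k+i}$ and $SPS_b\subseteq[0,B]$, the claim that $SPS$ fills $(u,v)$ is equivalent to the claim that $SPS_b$ fills $(0,B)$, so it suffices to treat the non-negative case.

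For the ``if'' direction I would run a greedy construction directly in the given order. Write $R_m:=\sum_{i>m}b_i$, fix a target $t\in(0,B)$, put $t_0=t$, and for $m=1,2,\dots$ include the index $m$ and set $t_m=t_{m-1}-b_m$ whenever $b_m\le t_{m-1}$, and otherwise skip $m$ and keep $t_m=t_{m-1}$. I would then prove by induction the invariant $0\le t_m\le R_m$: the inclusion step is immediate from the selection rule, while the skip step holds precisely because $t_{m-1}<b_m\le R_m$, which is exactly where the stated inequality enters. As $R_m\to0$ the residual $t_m\to0$, so the subsum over the chosen indices equals $t$, proving $(0,B)\subseteq SPS_b$.

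For the ``only if'' direction I would argue by contraposition, passing first to the non-increasing rearrangement $b_1\ge b_2\ge\cdots$ of the multiset $\{|a_k|\}$; this is legitimate because $SPS_b$ depends only on that multiset, and it is the canonical arrangement in which a failure of the condition is detectable. Suppose the inequality fails at an index $k_0$, that is $b_{k_0}>R_{k_0}$. Splitting any subsum into a prefix part taken from $\{b_1,\dots,b_{k_0}\}$ plus a tail part lying in $[0,R_{k_0}]$, monotonicity forces the prefix part to lie in $[0,\Sigma-b_{k_0}]\cup\{\Sigma\}$ with $\Sigma:=b_1+\cdots+b_{k_0}$, since deleting even one of the first $k_0$ terms costs at least $b_{k_0}$. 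Consequently no subsum meets the interval $(\Sigma-b_{k_0}+R_{k_0},\,\Sigma)$, which is nonempty exactly because $b_{k_0}>R_{k_0}$; this is a genuine gap inside $(0,B)$, so $SPS_b$ does not fill $(0,B)$. Carrying this back through $SPS=u+SPS_b$ produces the matching gap in $(u,v)$.

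The main obstacle is the necessity direction: finitely many ``large'' leading terms could a priori conspire with the tail to cover a prospective gap, and the displayed condition is sensitive to the order of the terms while $SPS$ is not. The non-increasing rearrangement is what resolves this, guaranteeing that every deletion from the prefix exceeds $R_{k_0}$ and hence that the two clusters $[\Sigma-b_{k_0},\,\Sigma-b_{k_0}+R_{k_0}]$ and $[\Sigma,\,\Sigma+R_{k_0}]$ stay separated. A final, minor point is the behaviour at the endpoints $u$ and $v$: because only the open interval is asserted, the greedy argument need reach just the interior targets, and both reductions above transport this conclusion cleanly between $SPS_b$ and $SPS$.
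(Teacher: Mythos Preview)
The paper does not prove this lemma at all; it is quoted from Kakeya \cite{[Ka]} and only the ``if'' direction is ever invoked (in the proof of Lemma~\ref{lem2.4}). So there is no proof in the paper to compare against, and I will assess your argument on its own merits.

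Your reduction to the nonnegative case via the bijection $A\mapsto(A\cap P)\cup(N\setminus A)$ is correct, and your greedy argument for the ``if'' direction is clean and valid in the given ordering: the invariant $0\le t_m\le R_m$ propagates exactly as you say, with the hypothesis entering precisely at the skip step.

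The ``only if'' direction, however, has a genuine gap---though the fault lies partly with the lemma as stated. You pass to the non-increasing rearrangement and then assume the inequality fails at some $k_0$ \emph{in that rearrangement}; your gap argument is then correct, but what you have proved is the contrapositive of ``$SPS$ fills $\Rightarrow$ condition holds in the non-increasing order'', not ``$SPS$ fills $\Rightarrow$ condition holds in the given order''. These are different, and in fact the latter is false: take $b_1=\tfrac14$, $b_2=\tfrac12$, and $b_k=2^{-k}$ for $k\ge3$. The multiset is $\{2^{-k}:k\ge1\}$, so by binary expansion $SPS_b=(0,1]$ fills $(0,1)$, yet $b_2=\tfrac12>\tfrac14=\sum_{i\ge3}b_i$, so the displayed condition fails at $k=2$. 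Kakeya's classical theorem is for the non-increasing arrangement, and that is the version you have actually established. You should say explicitly that the necessity direction requires the monotone arrangement (and hence that the paper's formulation is slightly inaccurate), rather than presenting the rearrangement as a harmless convenience; as written, your sentence ``this is legitimate because $SPS_b$ depends only on that multiset'' conflates invariance of $SPS_b$ under rearrangement with invariance of the tail condition, and the second fails.
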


Using Lemma \ref{lem2.3}, we can prove the following
two useful results that play key roles in the proof
of Theorem \ref{thm2}.

\begin{lem}\label{lem2.4}
Let $\sum_{k=1}^{\infty}a_k$ be a convergent
infinite series of positive real numbers and
$$V:=\Big\{\sum\limits_{i=1}^{m}a_{k_i} \Big| m\in
\mathbb{Z}^+, 1\le k_1<...<k_m \Big\}. $$
If
$$a_k\le \sum_{i=1}^\infty a_{k+i} \eqno(4)$$
holds for all $k\in \mathbb{Z}^+$, then the set $V$
is dense in the interval $(0, v)$ with $v:=\sum_{k=1}^{\infty}a_k$.
\end{lem}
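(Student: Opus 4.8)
The plan is to derive Lemma~\ref{lem2.4} from Kakeya's theorem (Lemma~\ref{lem2.3}) together with a short truncation argument. First I would apply Lemma~\ref{lem2.3} to the series $\sum_{k=1}^{\infty}a_k$ itself. Since every $a_k$ is positive we have $|a_k|=a_k$ for all $k$, so the hypothesis (4) is literally the condition $|a_k|\le\sum_{i=1}^{\infty}|a_{k+i}|$ required in Lemma~\ref{lem2.3}. It then remains to identify the quantities $u=\inf SPS$ and $v=\sup SPS$ attached to this series: because $\sum_{k=1}^{\infty}a_k$ converges we have $a_k\to 0$, and the one-term partial sums $a_1,a_2,a_3,\dots$ are all positive, so $\inf SPS=0$; taking $m=\infty$ and $k_i=i$ shows $\sup SPS=\sum_{k=1}^{\infty}a_k=v$. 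Hence Lemma~\ref{lem2.3} yields that $SPS$ contains every point of the open interval $(0,v)$.

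Next I would pass from $SPS$ to $V$. Fix any real number $x$ with $0<x<v$ and any $\varepsilon>0$. By the previous step $x\in SPS$, so $x=\sum_{i=1}^{m}a_{k_i}$ for some $m\in\mathbb{Z}^+\cup\{\infty\}$ and some indices $1\le k_1<k_2<\cdots$. If $m$ is finite, then already $x\in V$ and there is nothing to prove. If $m=\infty$, then each finite truncation $t_N:=\sum_{i=1}^{N}a_{k_i}$ belongs to $V$, and since $\sum_{i}a_{k_i}$ converges to $x$ we may choose $N$ so large that $0<x-t_N<\varepsilon$; thus $t_N\in V$ lies within $\varepsilon$ of $x$. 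As $x$ and $\varepsilon$ were arbitrary, $V$ is dense in $(0,v)$. One also records that $V\subseteq(0,v)$, because every element of $V$ is a finite sum of positive terms, hence positive, and is strictly smaller than $v$ since infinitely many positive $a_k$ are omitted.

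There is no serious obstacle in this argument; the only point that needs care is the bookkeeping in the second step, namely that every infinite subsum occurring in $SPS$ really is the limit of its own finite truncations, which are themselves legitimate members of $V$. This observation is exactly what converts Kakeya's strong conclusion ``the subsum set fills the whole interval'' into the weaker-looking statement ``the finite subsums are dense in the interval'' that we need here. I would also note at the outset that the series is genuinely infinite with all terms positive, so that $v=\sum_{k=1}^{\infty}a_k$ strictly exceeds every finite subsum and $(0,v)$ is the correct target interval.
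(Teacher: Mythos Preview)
Your argument is correct and follows essentially the same route as the paper's proof: apply Lemma~\ref{lem2.3} to identify $SPS$ with the interval $(0,v)$, and then observe that any infinite subsum in $SPS$ is the limit of its finite truncations, each of which lies in $V$. Your added justifications (why $\inf SPS=0$ and why $V\subseteq(0,v)$) are a little more explicit than the paper's, but the structure is identical.
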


\begin{proof}
From the condition (4) and Lemma \ref{lem2.3}, we know that the set
$$SPS=\Big\{\sum\limits_{i=1}^{m}a_{k_i} \Big| m\in
\mathbb{Z}^+\cup \{ \infty \}, 1\le k_1<...<k_m \Big\}$$
consists of all the values in the interval $(0, v)$ since here $\inf SPS=0$.
Let $r$ be any given real number in $(0, v)$ and $\varepsilon$
be any sufficiently small positive number (one may let
$\varepsilon<\min(r, v-r)$). Then $r\in SPS$ which
implies that there is an integer $m\in\mathbb{Z}^+\cup \{ \infty \}$
and there are $m$ integers $k_1, ..., k_m$ with $1\le k_1<...<k_m$
such that $r=\sum_{i=1}^{m}a_{k_i}$.

If $m\in\mathbb{Z}^+$, then $r\in V$. Lemma 2.4 is true
in this case.

If $m=\infty $, then $r=\sum_{i=1}^{\infty}a_{k_i}$.
That is, ${\rm limit}_{n\rightarrow \infty}\sum_{i=1}^n a_{k_i}=r$.
Thus there is a positive integer $m'$ such that
$|r-\sum_{i=1}^{m'}a_{k_i}|<\varepsilon$.
Noticing that all $a_{k_i}$ are positive,
we deduce that $r-\varepsilon<\sum_{i=1}^{m'}a_{k_i}<r$
as desired.

This completes the proof of Lemma \ref{lem2.4}.
\end{proof}

\begin{lem}\label{lem2.5}
Let $\sum_{k=1}^{\infty}a_k$ be a divergent infinite series
of positive real numbers with $a_k$ decreasing as $k$ increasing
and $a_k\rightarrow 0$ as $k\rightarrow\infty$. Define
$$V:=\Big\{\sum\limits_{i=1}^{m}a_{k_i}\Big| m\in
\mathbb{Z}^+, 1\le k_1<...<k_m \Big\}. $$
Then the set $V$ is dense in the interval $(0, +\infty)$.
\end{lem}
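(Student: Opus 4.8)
The plan is to reduce Lemma \ref{lem2.5} to Lemma \ref{lem2.4} by truncating the divergent series so as to obtain a convergent series whose tail sum is as large as we wish, while still preserving the Kakeya-type inequality (4). Concretely, given a target point $r>0$ and a tolerance $\varepsilon>0$, I would first choose a large index $N$ and work with the truncated series $\sum_{k=N}^{\infty} a_k$; since the original series diverges but its terms tend to $0$, for a suitable choice of $N$ together with a further truncation at some index $M>N$ we can arrange that the finite block $\sum_{k=N}^{M} a_k$ has sum close to (and exceeding) $r$. The key point is that a convergent (finite) block $\sum_{k=N}^{M}a_k$ of a decreasing sequence automatically satisfies the hypothesis (4) of Lemma \ref{lem2.4}: because $a_k$ is decreasing, for each $k$ with $N\le k<M$ we have $a_k\le a_{k+1}+\cdots$, and more carefully one builds a genuinely convergent series out of this block (for instance by appending a rapidly decaying tail) so that Lemma \ref{lem2.4} applies verbatim.

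More precisely, here is the order of steps I would carry out. First, fix $r\in(0,+\infty)$ and $\varepsilon\in(0,r)$. Second, using $a_k\to 0$, pick $N$ so large that $a_N<\varepsilon$; then using divergence of $\sum a_k$, pick the least $M>N$ with $\sum_{k=N}^{M}a_k> r$, which is possible since the partial sums from $N$ onward still tend to $+\infty$. By minimality of $M$ and the bound $a_M\le a_N<\varepsilon$, we get $r<\sum_{k=N}^{M}a_k< r+\varepsilon$. Third, consider the new (now convergent) series $b_1,b_2,\ldots$ obtained by listing $a_N,a_{N+1},\ldots,a_M$ followed by a tail $c_1,c_2,\ldots$ of positive reals chosen decreasing, convergent, and small enough (say $\sum_j c_j<\min(a_M,\varepsilon)$, with each $c_j\le a_M$) so that the concatenation is still decreasing; then $\{b_k\}$ satisfies $b_k\le\sum_{i>k}b_i$ for every $k$ (for indices inside the $a$-block this uses that the remaining $a$-terms plus the positive tail dominate $a_k$, which follows from monotonicity when the block has at least two terms, and for indices in the $c$-tail it is immediate from the defining inequality analogous to the proof of Lemma \ref{lem2.4}); one must handle the trivial edge case $M=N$ separately, where $a_N=\sum_{k=N}^{M}a_k>r$ and a direct argument or a slightly smaller choice of $N$ works. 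Fourth, apply Lemma \ref{lem2.4} to $\sum b_k$: its total sum $v'=\sum_{k=N}^{M}a_k+\sum_j c_j$ exceeds $r$, and $r\in(0,v')$, so $V':=\{\sum b_{k_i}: m\in\mathbb Z^+,\ k_1<\cdots<k_m\}$ is dense in $(0,v')$ and in particular contains a point within $\varepsilon$ of $r$. Finally, observe that any finite subsum of the $b_k$ whose value lies within $\varepsilon$ of $r$ can be taken to involve only $a$-terms (since each $c_j<\varepsilon$ is negligible and can be deleted, adjusting by at most $\sum_j c_j<\varepsilon$): this yields a finite increasing selection $N\le k_1<\cdots<k_m\le M$ with $|r-\sum_{i=1}^{m}a_{k_i}|<$ (a constant multiple of) $\varepsilon$, and such a sum is an element of the original set $V$. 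Rescaling $\varepsilon$ at the outset gives density of $V$ in $(0,+\infty)$.

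The main obstacle I anticipate is the bookkeeping needed to verify that the concatenated sequence $\{b_k\}$ genuinely satisfies the hypothesis $b_k\le\sum_{i>k}b_i$ of Lemma \ref{lem2.4} at \emph{every} index, especially at the seam between the finite $a$-block and the appended convergent tail, and at the last index $k=M$ of the block (where only the tail is left to dominate $a_M$ — this is exactly why we require each $c_j\le a_M$ and enough of them). A clean way to sidestep most of this is to append as the tail the terms $a_{M+1},a_{M+2},\ldots$ of a \emph{convergent} dominated comparison, or simply $c_j=a_M/2^j$; with $a_k$ decreasing the inequality then holds throughout by an easy induction, and the only real case analysis is the degenerate possibility that the first index $N$ with $a_N<\varepsilon$ already overshoots ($a_N>r$ or the block degenerates), which is dispatched by instead choosing $N$ with $a_N<\min(\varepsilon,r)$ and noting that $\sum_{k\ge N}a_k=+\infty$ forces $M>N$. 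Everything else is routine triangle-inequality estimation.
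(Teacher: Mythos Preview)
Your reduction to Lemma \ref{lem2.4} has a genuine gap: the concatenated sequence $b_1,\dots,b_{M-N+1},c_1,c_2,\dots$ need \emph{not} satisfy the Kakeya inequality $b_k\le\sum_{i>k}b_i$ at every index, and monotonicity of $(a_k)$ alone does not give it. The trouble is not at the seam (index corresponding to $a_M$), where you correctly arrange $\sum_j c_j=a_M$, but one step earlier. With $c_j=a_M/2^j$ the tail beyond the index of $a_{M-1}$ equals $a_M+\sum_j c_j=2a_M$, so you would need $a_{M-1}\le 2a_M$; however a decreasing divergent sequence with $a_k\to 0$ can certainly have $a_{M-1}>2a_M$ at the particular $M$ produced by your minimality step. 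For instance, with $r=\tfrac12$ and $\varepsilon=0.1$, take $a_N=\cdots=a_{N+4}=0.09$, $a_{N+5}=0.05$, $a_{N+6}=0.01$, followed by a slowly decreasing divergent continuation; then the least $M$ with $\sum_{k=N}^{M}a_k>r$ is $M=N+6$, and $a_{M-1}=0.05>0.02=2a_M$. Enlarging the $c$-tail to force the inequality at such indices would destroy your final step, where you discard the $c$-terms at total cost $\sum_j c_j<\varepsilon$.

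The paper avoids Lemma \ref{lem2.4} altogether and constructs the approximating element of $V$ by a direct greedy procedure: it selects successive consecutive blocks $[m_0,m_1],[m_2,m_3],\dots$, each chosen maximal so that the running total stays below $r$, then jumps ahead to the first later index whose term is small enough to resume; since $a_k\to 0$, the overshoot $a_{m_{2t+1}+1}$ eventually falls below $\varepsilon$, giving a finite subsum in $(r-\varepsilon,r)$. In fact a one-line variant does the job and is perhaps what you were reaching for: choose $N$ with $a_N<\min(\varepsilon,r)$, observe that the consecutive sums $S_j=\sum_{k=N}^{j}a_k$ increase from $S_N=a_N<r$ to $+\infty$ in increments $a_{j+1}\le a_N<\varepsilon$, and take the largest $j$ with $S_j\le r$; then $S_j\in(r-\varepsilon,r]$ and $S_j\in V$. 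This uses only monotonicity, divergence, and $a_k\to 0$, with no appeal to the Kakeya condition.
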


\begin{proof}
Let $r$ be any given real number in $(0, +\infty)$ and $\varepsilon$
be any sufficiently small positive number (one may let $\varepsilon<r$).
Let $a_0:=0$ and $m_0=0$. Since the series $\sum_{k=0}^{\infty}a_k$ is
divergent, there exists a unique integer $m_1\ge 0$ such that
$$\sum\limits_{k=m_0}^{m_1}a_k<r$$
and
$$\sum\limits_{k=m_0}^{m_1}a_k+a_{m_1+1}\ge r. $$

On the one hand, since $a_k$ decreases as $k$ increases
and $a_k\rightarrow 0$ as $k\rightarrow\infty$,
there is an integer $m_2$ with $m_2>m_1+1$ and
$$a_{m_2}<r-\sum\limits_{k=m_0}^{m_1}a_k\le a_{m_1+1}. $$
Moreover, there exists an integer $m_3$ with $m_3\ge m_2$ and
$$\sum\limits_{k=m_0}^{m_1}a_k+\sum\limits_{k=m_2}^{m_3}a_k<r$$
and
$$\sum\limits_{k=m_0}^{m_1}a_k+\sum\limits_{k=m_2}^{m_3}a_k+a_{m_3+1}\ge r$$
since $\sum_{k=m_2}^{\infty}a_k$ also diverges.

Continuing in this way, we can form an increasing sequence
$\{m_k\}_{k=0}^{\infty}$ such that
$$\sum\limits_{k=m_0}^{m_1}a_k+\sum\limits_{k=m_2}^{m_3}a_k+\dots
+\sum\limits_{k=m_{2t}}^{m_{2t+1}}a_k<r$$
but
$$\sum\limits_{k=m_0}^{m_1}a_k+\sum\limits_{k=m_2}^{m_3}a_k+\dots
+\sum\limits_{k=m_{2t}}^{m_{2t+1}}a_k+a_{m_{2t+1}+1}\ge r$$
for any nonnegative integer $t$. Obviously, one has
$$\sum_{k=m_0}^{m_1}a_k+\sum\limits_{k=m_2}^{m_3}a_k
+\dots+\sum_{k=m_{2t}}^{m_{2t+1}}a_k\in V.$$

On the other hand, since $\lim_{k\rightarrow+\infty}a_k=0$, it follows that
there exists a nonnegative integer $t_0$ such that $a_{m_{2t_0+1}+1}<\varepsilon$.
That is, we have
$$r-\varepsilon<r-a_{m_{2t_0+1}+1}\le \sum\limits_{k=m_0}^{m_1}a_k
+\sum\limits_{k=m_2}^{m_3}a_k+\dots+\sum\limits_{k=m_{2t_0}}^{m_{2t_0+1}}a_k<r.$$
Hence $V$ is dense in the interval $(0, +\infty)$.

This concludes the proof of Lemma \ref{lem2.5}.
\end{proof}

\section{Proof of Theorem \ref{thm1}}
As usual, for any prime $p$ and for any integer $x$,
we let $v_p(x)$ stand for the {\it $p$-adic valuation}
of $x$, i.e., $v_p(x)$ is the biggest nonnegative integer
$r$ with $p^r$ dividing $x$. If $x=\frac{a}{b}$,
where $a$ and $b$ are integers and $b\ne 0$,
then define $v_p(x):=v_p(a)-v_p(b)$.

We can now prove Theorem \ref{thm1} as follows.\\

{\it Proof of Theorem \ref{thm1}.}
We just need to prove that $H_{f}(\mathcal{S}_n)$ is between two adjacent integers
or $v_p(H_f(\mathcal{S}_n))<0$ for some prime $p$.
Let $f(x)=a_mx^m+a_{m-1}x^{m-1}+\dots +a_1x+a_0$ with $m\ge 2$ and $a_m\ge 1$.

If there is some $a_i\ne 0$ with $1\le i\le m-1$, then
$f(k)\ge a_mk^m+a_ik^i\ge k^2+k$ for all $k\ge 1$. Therefore,
$$H_{f}(\mathcal{S}_n)=\sum\limits_{k=1}^{n}\frac{1}{f(k)^{s_{k}}}
\le \sum\limits_{k=1}^{n}\frac{1}{k^2+k}
=\sum\limits_{k=1}^{n}\Big(\frac{1}{k}-\frac{1}{k+1}\Big)
=1-\frac{1}{n+1}<1. $$

If $a_i=0$ for all $0\le i\le m-1$, then $f(x)=a_mx^m$.
Furthermore,
$$1<H_{f}(\mathcal{S}_n)=\sum\limits_{k=1}^{n}\frac{1}{(k^{m})^{s_k}}
\le \sum\limits_{k=1}^{n}\frac{1}{k^2}<\zeta(2)=\frac{\pi^2}{6}<2$$
when $a_m=1$, and
$$0<H_{f}(\mathcal{S}_n)=\sum\limits_{k=1}^{n}\frac{1}{(a_mk^{m})^{s_k}}
\le \frac{1}{2}\sum\limits_{k=1}^{n}\frac{1}{k^2}<1. $$
when $a_m\ge 2$.

If $a_i=0$ for all $1\le i\le m-1$ and $a_0\ne 0$,
then $f(x)=a_mx^m+a_0$. Moreover, if $a_m\ge 2$ or $a_0\ge 2$,
then $f(k)=k^m+(a_m-1)k^m+a_0\ge k^2+2$ for all $k\ge 1$. So
$$0<H_{f}(\mathcal{S}_n)\le \sum\limits_{k=1}^{n}\frac{1}{k^2+2}
\le \sum\limits_{k=1}^{n}\frac{1}{k^2+1}-\frac{1}{2}-\frac{1}{5}+
\frac{1}{3}+\frac{1}{6}< \sum\limits_{k=1}^{{\infty}}\frac{1}{k^2+1}
-\frac{1}{5}<1. $$
If $m\ge 3$, then $f(k)\ge k^3+1$ for all $k\ge 1$. So
$$0<H_{f}(\mathcal{S}_n)\le \sum\limits_{k=1}^{n}\frac{1}{k^3+1}
\le \sum\limits_{k=1}^{n}\frac{1}{k^3}-1-\frac{1}{8}+
\frac{1}{2}+\frac{1}{9}< \zeta (3)-\frac{37}{72}<1. $$

In what follows, we let $f(x)=x^2+1$.

By Lemma \ref{lem2.2},
there is a prime $p$ such that $p \equiv 1 \pmod 4$ and $r_p\le n<p$
where $r_p$ is the smallest positive root of $x^2+1\equiv 0 \pmod p$.
Since $p\mid(r_p^2+1)$, one has $r_p^2\ge p-1$.
Noticing that $p\ge 5$, it follows that
$$2 \le \sqrt{p-1} \le r_p \le \frac{p-1}{2}<p-r_p.$$
Therefore
$$0<r_p^2+1<(p-r_p)^2+1\le (p-2)^2+1<p^2.$$
This infers that $v_p(r_p^2+1)=1$ and $v_p((p-r_p)^2+1)=1$.
So we have
$$v_p\Big(\dfrac{1}{(r_p^2+1)^{s_{r_p}}}\Big)=-s_{r_p}<0,$$
$$v_p\Big(\dfrac{1}{{((p-r_p)^2+1)}^{s_{p-r_p}}}\Big)=-s_{p-r_p}<0$$
and
$$v_p\Big(\dfrac{1}{(k^2+1)^{s_{k}}}\Big)=0$$
for any integer $k$ with $1\le k\le p$ and $k\not\in\{r_p, p-r_p\}$.

Now we divide the proof into the following two cases.

{\sc Case 1.} $r_p \le n < p-r_p$. Since
$$v_p\Big(\sum_{k=1\atop k\ne r_p}^n \dfrac{1}{(k^2+1)^{s_{k}}}\Big)\ge 0,$$
it follows from the isosceles triangle principle (see, for example, \cite{[K]})
that
$$v_p(H(\mathcal{S}_n))=v_p\Big(\dfrac{1}{(r_p^2+1)^{s_{r_p}}}
+\sum_{k=1\atop k\ne r_p}^n \dfrac{1}{(k^2+1)^{s_{k}}}\Big)
=v_p\Big(\dfrac{1}{(r_p^2+1)^{s_{r_p}}}\Big)=-s_{r_p}<0.$$
Namely, $H(\mathcal{S}_n) \notin \mathbb{Z}$.
So Theorem 1.1 is proved in this case.

{\sc Case 2.} $p-r_p \le n < p$. Let $H(\mathcal{S}_n)=A+B$, where
$$A:=\frac{1}{(r_p^2+1)^{s_{r_p}}}+\frac{1}{((p-r_p)^2+1)^{s_{p-r_p}}}$$
and
$$B:=\sum_{k=1\atop k\ne r_p, k\ne p-r_p}^n\dfrac{1}{(k^2+1)^{s_{k}}}.
$$
Evidently, one has $v_p(B)\ge 0$.
We claim that $v_p(A)<0$. Then by the claim and
the isosceles triangle principle again, we obtain that
\begin{align*}
v_p(H(\mathcal{S}_n))&=v_p(A+B)=v_p(A)<0,
\end{align*}
which implies that $H(\mathcal{S}_n) \notin \mathbb{Z}$ as desired.
It remains to show the truth of the claim.

If $s_{r_p}\ne s_{p-r_p}$, then it is obvious that
$v_p(A)=\min(-s_{r_p},-s_{p-r_p})<0.$
So the claim is true if $s_{r_p}\ne s_{p-r_p}$.

Now let $s_{r_p} = s_{p-r_p}:= s$. Then
$$A=\frac{((p-r_p)^2+1)^s+(r_p^2+1)^s}{(r_p^2+1)^s((p-r_p)^2+1)^s}.$$
We introduce an auxiliary function $g(x)$ as follows:
$$g(x):=\Big(\Big(\frac{p}{2}+x\Big)^2+1\Big)^s
+\Big(\Big(\frac{p}{2}-x\Big)^2+1\Big)^s.$$
Then the derivative of $f(x)$ is
$$g'(x)=s\Big(\Big(\frac{p}{2}+x\Big)^2+1\Big)^{s-1}(p+2x)-
s\Big(\Big(\frac{p}{2}-x\Big)^2+1\Big)^{s-1}(p-2x).$$
So $g'(x)>0$ if $x \in (0,\frac{p}{2}]$.
This implies that $g(x)$ is increasing if $x\in (0,\frac{p}{2}]$.

Since $2\le r_p<\frac{p}{2}$, one derives that
$0<\frac{p}{2}-r_p\le \frac{p}{2}-2<\frac{p}{2}$.
Hence
\begin{align*}
((p-r_p)^2+1)^s+(r_p^2+1)^s&=g\Big(\frac{p}{2}-r_p\Big)\\
&\le g\Big(\frac{p}{2}-2\Big) \\
&=(2^2+1)^s+((p-2)^2+1)^s\\
&< 5^s+(p-1)^{2s}\\
&< p^{2s},
\end{align*}
where the last inequality follows from the fact
that $p\ge 5$ implies that
$$p^{2s}-(p-1)^{2s}=(p^s+(p-1)^s)(p^s-(p-1)^s)>5^s.$$
Thus
$$v_p\big(((p-r_p)^2+1)^s+(r_p^2+1)^s\big)<2s.$$
Therefore
\begin{align*}
v_p(A) &= v_p\big(((p-r_p)^2+1)^s+(r_p^2+1)^s\big)
-v_p\big((r_p^2+1)^s((p-r_p)^2+1)^s\big)\\
&< 2s-2s = 0.
\end{align*}
The claim holds if $s_{r_p}=s_{p-r_p}$. The claim is proved.

This finishes the proof of Theorem \ref{thm1}.
\hfill$\Box$

\section{Proof of Theorem \ref{thm2}}

In the section, we present the proof of Theorem \ref{thm2}.\\

{\it Proof of Theorem \ref{thm2}.} Let
$$V_f:=\Big\{\sum\limits_{i=1}^{m}\frac{1}{f(k_i)}\Big| m\in
\mathbb{Z}^+, 1\le k_1<...<k_m \Big\}$$
and
$$\bar V_f:=\Big\{\sum\limits_{i=1}^{m}\frac{1}{f(k_i)}\Big| m\in
\mathbb{Z}^+, 2\le k_1<...<k_m\Big\}. $$
Pick any given real number $r$ in $(\inf U_f, \sup U_f)$ and let
$\varepsilon$ be any sufficiently small positive number
(one may let $\varepsilon<\min(r-\inf U_f, \sup U_f-r )$).

(i). Since $f(x)$ is a polynomial of nonnegative integer and
degree one, it follows that $\sum_{k=1}^{\infty}\frac{1}{f(k)}$
(resp. $\sum_{k=2}^{\infty}\frac{1}{f(k)}$)
is a divergent infinite series of positive real numbers with
$\big\{\frac{1}{f(k)}\big\}_{k=1}^{\infty}$
(resp. $\big\{\frac{1}{f(k)}\big\}_{k=2}^{\infty}$)
directly decreasing to $0$ as $k$ increases. By Lemma \ref{lem2.5},
we know that $V_f$ (resp. $\bar V_f$) is dense in the interval
$(0, +\infty )$. Clearly, we have $\sup U_f=\sup V_f=+\infty$.

If $f(1)=1$, then $f(x)=x$ which implies that $f(2)>1$,
$\inf U_f=1$ and $r\in (\inf U_f, \sup U_f)=(1, +\infty)$.
Since $\bar V_f$ is dense in the interval $(0, +\infty )$,
there is an element
$$\sum\limits_{i=1}^{m}\frac{1}{f(k_i)}\in
\Big(r-1-\varepsilon, r-1-\frac{\varepsilon}{2}\Big)\eqno(5)$$
with $2\le k_1<\dots<k_m$. Now let $s_k=1$ for
$k\in \{k_1, \dots ,k_m\}$ and
$s_k>\frac{\log\frac{2k_m}{\varepsilon}}{\log f(2)}$
for $k\in \{2, 3, \dots ,k_m\}\setminus \{k_1, \dots ,k_m\}$.
Then
$$0\le \sum\limits_{k=2\atop k\not\in
\{k_1, \dots ,k_m\}}^{k_m}\frac{1}{f(k)^{s_k}}
<\frac{k_m}{f(2)^{\frac{\log\frac{2k_m}{\varepsilon}}{\log f(2)}}}
=\frac{\varepsilon}{2}. \eqno(6)$$
It follows from (5) and (6) that
$$\sum\limits_{k=1}^{k_m}\frac{1}{f(k)^{s_k}}=1+
\sum\limits_{k=2\atop k\not\in \{k_1, \dots ,k_m\}}^{k_m}\frac{1}{f(k)^{s_k}}
+\sum\limits_{i=1}^{m}\frac{1}{f(k_i)^{s_{k_i}}}
\in (r-\varepsilon, r). $$
That is, $U_f$ is dense in the interval $(\inf U_f, \sup U_f)=(1, +\infty)$
in this case.

If $f(1)>1$, then $\inf U_f=0$ and $r\in (\inf U_f, \sup U_f)=(0, +\infty )$.
Since $V_f$ is dense in the interval $(0, +\infty )$,
there is an element
$$\sum\limits_{i=1}^{m}\frac{1}{f(k_i)}\in
\Big(r-\varepsilon, r-\frac{\varepsilon}{2}\Big) \eqno(7)$$
with $1\le k_1<\dots<k_m$. Now, let $s_k=1$ for
$k\in \{k_1, \dots ,k_m\}$ and
$s_k>\frac{\log\frac{2k_m}{\varepsilon}}{\log f(1)}$
for $k\in \{1, 2, \dots ,k_m\}\setminus \{k_1, \dots ,k_m\}$.
One has
$$0\le \sum\limits_{k=1\atop k\not\in
\{k_1, \dots ,k_m\}}^{k_m}\frac{1}{f(k)^{s_k}}
<\frac{k_m}{f(1)^{\frac{\log\frac{2k_m}{\varepsilon}}{\log f(1)}}}
=\frac{\varepsilon}{2},  \eqno(8)$$
and so by (7) and (8),
$$\sum\limits_{k=1}^{k_m}\frac{1}{f(k)^{s_k}}=
\sum\limits_{k=1\atop k\not\in \{k_1, \dots ,k_m\}}^{k_m}\frac{1}{f(k)^{s_k}}
+\sum\limits_{i=1}^{m}\frac{1}{f(k_i)^{s_{k_i}}}
\in (r-\varepsilon, r). $$
Namely, $U_f$ is dense in the interval $(\inf U_f, \sup U_f)=(0, +\infty )$
in this case.

(ii). First of all, since $f(x)$ is a polynomial of nonnegative
integer and $\deg f(x)\ge 2$, we know that $\sum_{k=1}^{\infty}\frac{1}{f(k)}$
is a convergent infinite series of positive real numbers.
With the hypothesis $\frac{1}{f(k)}\le\sum_{i=1}^\infty\frac{1}{f(k+i)}$
for any positive integer $k$, Lemma \ref{lem2.4} yields that $V_f$
is dense in the interval $(0, \sup V_f)$.

We claim that $f(1)>1$. Otherwise, $f(1)=1$.
Then $f(x)=x^m$ with $m\ge 2$. However,
$$\frac{1}{f(1)}=1>\frac{\pi ^2}{6}-1
=\sum_{i=1}^\infty\frac{1}{(1+i)^2}
\ge \sum_{i=1}^\infty\frac{1}{f(1+i)}, $$
which contradicts with our hypothesis. So we
must have $f(1)>1$. The claim is proved.

In the following, we let $f(1)>1$. Then $\inf U_f=0$,
$\sup U_f=\sup V_f=\alpha_f$
and $r\in (\inf U_f, \sup U_f)=(0, \alpha_f)$.
Since $V_f$ is dense in the interval $(0, \sup V_f)
=(0, \alpha_f)$, there is an element
$$\sum\limits_{i=1}^{m}\frac{1}{f(k_i)}\in
\Big(r-\varepsilon, r-\frac{\varepsilon}{2}\Big)$$
with $1\le k_1<\dots<k_m$. Then letting $s_k=1$ for
$k\in \{k_1, \dots ,k_m\}$ and
$s_k>\frac{\log\frac{2k_m}{\varepsilon}}{\log f(1)}$
for $k\in \{1, 2, \dots ,k_m\}\setminus \{k_1, \dots ,k_m\}$
gives us that
$$0\le \sum\limits_{k=1\atop k\not\in
\{k_1, \dots ,k_m\}}^{k_m}\frac{1}{f(k)^{s_k}}
<\frac{k_m}{f(1)^{\frac{\log\frac{2k_m}{\varepsilon}}{\log f(1)}}}
=\frac{\varepsilon}{2}.$$
It infers that
$$\sum\limits_{k=1}^{k_m}\frac{1}{f(k)^{s_k}}=
\sum\limits_{k=1\atop k\not\in \{k_1, \dots ,k_m\}}^{k_m}
\frac{1}{f(k)^{s_k}}+\sum\limits_{i=1}^{m}\frac{1}{f(k_i)^{s_{k_i}}}
\in (r-\varepsilon, r). $$
In other words, $U_f$ is dense in the interval
$(0, \alpha_f)$. So part (ii) is proved.

The proof of Theorem \ref{thm2} is complete.
\hfill$\Box$

\section{Concluding remarks}
1. Let $\mathcal{T}:=((0, \alpha)\bigcap\mathbb{Q})
\setminus \bigcup\limits_{\mathcal{S}\in (\mathbb{Z}^+)
^{\infty}}H(\mathcal{S})$. Then Theorem 1.1 tells us that
$1\in \mathcal{T}$. But it is well known that if $p$ is
a prime, then the congruence $x^2+1\equiv 0\pmod p$ is
solvable if and only if either $p=2$, or $p\equiv 1\pmod 4$.
Thus for any infinite sequence $\mathcal{S}$ of positive
integers and for any positive integer $n$, if one writes
$H(\mathcal{S}_n)=\frac{H_1(\mathcal{S}_n)}{H_2(\mathcal{S}_n)}$,
where $H_1(\mathcal{S}_n), H_2(\mathcal{S}_n)\in\mathbb{Z}^+$
and $\gcd(H_1(\mathcal{S}_n), H_2(\mathcal{S}_n))=1$,
then $H_2(\mathcal{S}_n)$ is not divisible by any prime
$p$ with $p\equiv 3\pmod 4$. It follows that
$\mathcal{L}:=\{ \frac{a}{b}\in (0, \alpha)
|a, b\in\mathbb{Z}, (a, b)=1, b\ {\rm is \ divisible \ by
\ at \ least \ a \ prime} \ p \ {\rm with} \ p\equiv 3\pmod 4\}
\subset\mathcal{T}$.
An interesting question naturally arises:
Are there other elements in the set $\mathcal{T}$
except for the elements in $\{1\}\bigcup\mathcal{L}$?
Further, one would like to determine the set
$\mathcal{T}$. This problem is kept open so far.

2. We let $f(x)$ be a polynomial of nonnegative integer coefficients
and of degree at least two, and let $U_f$ be the union set given
in Theorem \ref{thm2}. Then part (ii) of Theorem \ref{thm2}
says that the condition (2) is a sufficient condition such that
the union set $U_f$ is dense in the interval $(0, \alpha_f)$.
One may ask the following interesting question: What is the
sufficient and necessary condition for the union set $U_f$
to be dense in the interval $(0, \alpha_f)$?

3. Now let $f(x)$ be a nonzero polynomial of integer coefficients. Let
$Z_f:=\{x\in \mathbb{Z}: f(x)=0\}$ be the set of integer roots of $f(x)$
and $\{a_i\}_{i=1}^{\infty}:=\mathbb{Z}^+\setminus Z_f$ be arranged in the
increasing order. Then $f(a_i)\ne 0$ for all integers $i\ge 1$.
Let $n$ and $k$ be integers such that $1\le k\le n$ and let
$H_f^{(k)}(\mathcal{S}_n)$ stand for the $k$-th elementary symmetric
functions of
$$\frac{1}{f(a_1)^{s_1}}, \frac{1}{f(a_2)^{s_2}}, ..., \frac{1}{f(a_n)^{s_n}}.$$
That is,
$$H_f^{(k)}(\mathcal{S}_n):=\sum\limits_{1\leq i_{1}<\cdots<i_{k}\le n}
\prod\limits_{j=1}^{k}\frac{1}{f(a_{i_{j}})^{s_{i_j}}}.$$
Then $H_f^{(1)}(\mathcal{S}_n)=H_f(\mathcal{S}_n)$. Let
$$\bar H_f^{(k)}(\mathcal{S}_n):=\sum\limits_{1\leq i_{1}\le\cdots\le i_{k}\le n}
\prod\limits_{j=1}^{k}\frac{1}{f(a_{i_{j}})^{s_{i_j}}}.$$
When $f(x)$ is of nonnegative integer coefficients and $s_i=1$ for
all integers $i\ge 1$, the integrality of $H_f^{(k)}(\mathcal{S}_n) (1\le k\le n)$
was previously investigated in \cite{[CT]}, \cite{[EN]},
\cite{[HW]}, \cite{[LHQW]} and \cite{[WH]}. But such integrality
problem has not been studied when $f(x)$ contains negative coefficients.
On the one hand, for any given integer $N_0\ge 1$, one can easily
find a polynomial $f_0(x)$ of integer coefficients such that
for all integers $n$ and $k$ with $1\le k\le n\le N_0$, both of
$H_{f_0}^{(k)}(\mathcal{S}_n)$ and $\bar H_{f_0}^{(k)}(\mathcal{S}_n)$
are integers. Actually, letting
$$f_0(x)=\prod_{i=1}^{N_0}(x-i)\pm 1$$
gives us the expected result. On the other hand, for any given nonzero
polynomial $f(x)$ of
integer coefficients, we believe that the similar integrality result
is still true. So in concluding this paper, we suggest the following
more general conjecture that generalizes Conjecture 4.1 of \cite{[FHJY]}
and Conjecture 3.1 of \cite{[LHQW]}.

\begin{con}
Let $f(x)$ be a nonzero polynomial of integer coefficients and
$\mathcal{S}=\{s_i\}_{i=1}^\infty$ be an infinite sequence of
positive integers (not necessarily increasing and not necessarily
distinct). Then there is a positive integer $N$ such that for
any integer $n\ge N$ and for all integers $k$ with $1\le k\le n$,
both of $H_f^{(k)}(\mathcal{S}_n)$ and $\bar H_f^{(k)}(\mathcal{S}_n)$
are not integers.
\end{con}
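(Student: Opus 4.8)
\medskip
\noindent\emph{Towards a proof of the Conjecture.} The natural line is the $p$-adic valuation method underlying Theorem~\ref{thm1}: for each large $n$ and each $k$ with $1\le k\le n$ one seeks a prime $p$ making $v_p$ of the relevant symmetric function negative. Write $t_i:=f(a_i)^{s_i}$, so that $H_f^{(k)}(\mathcal S_n)=\sum_{1\le i_1<\cdots<i_k\le n}(t_{i_1}\cdots t_{i_k})^{-1}$ and $\bar H_f^{(k)}(\mathcal S_n)=\sum_{1\le i_1\le\cdots\le i_k\le n}(t_{i_1}\cdots t_{i_k})^{-1}$. The constant case must be discarded ($f\equiv1$ with $s_i\equiv1$ gives $H_f^{(k)}(\mathcal S_n)=\binom nk\in\mathbb Z$); a nonconstant $f$ has infinitely many prime divisors, and the Sylvester--Erd\H{o}s type results on the prime factors of $\prod_{i\le n}f(a_i)$ (polynomial-sequence analogues of Lemma~\ref{lem2.1}; see \cite{[CT], [HW], [LHQW], [WH]}) are expected to provide, for all large $n$, a prime $p$ for which $I_p:=\{i\le n:p\mid f(a_i)\}$ is nonempty and small ($|I_p|\le\deg f$ once $p$ is large). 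The model case is $|I_p|=1$, say $I_p=\{i_0\}$: then $v_p(t_{i_0})\ge1$ and $v_p(t_i)=0$ for every $i\le n$, $i\ne i_0$. For $\deg f\ge2$ one must also treat $|I_p|>1$, already present in the proof of Theorem~\ref{thm1}, where $I_p=\{r_p,p-r_p\}$.

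In the model case the complete homogeneous sum is immediate: among the monomials of $\bar H_f^{(k)}(\mathcal S_n)$ the one equal to $t_{i_0}^{-k}$ has valuation $-k\,v_p(t_{i_0})$, while one in which the index $i_0$ occurs with multiplicity $m<k$ has valuation $-m\,v_p(t_{i_0})>-k\,v_p(t_{i_0})$; being the unique $p$-adically dominant term it survives, so $v_p(\bar H_f^{(k)}(\mathcal S_n))=-k\,v_p(t_{i_0})<0$ and $\bar H_f^{(k)}(\mathcal S_n)\notin\mathbb Z$. For the elementary symmetric sum, grouping the monomials according to whether they contain $i_0$ yields $t_{i_0}H_f^{(k)}(\mathcal S_n)=E_{k-1}+t_{i_0}E_k$, where $E_r$ is the $r$-th elementary symmetric function of the $n-1$ values $t_j^{-1}$, $j\ne i_0$ (all $p$-adic units); here $t_{i_0}E_k$ has valuation $\ge v_p(t_{i_0})\ge1$, whereas $E_{k-1}$ is a $p$-adic unit times the integer $\sigma:=e_{n-k}\big((f(a_j)^{s_j})_{j\ne i_0}\big)$. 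Hence $v_p(H_f^{(k)}(\mathcal S_n))=-v_p(t_{i_0})<0$, so $H_f^{(k)}(\mathcal S_n)\notin\mathbb Z$, \emph{provided} $p\nmid\sigma$; this holds automatically for $k\in\{1,n\}$, so that for $k=1$ one recovers the $p$-adic mechanism of Theorem~\ref{thm1}, and the whole difficulty of the elementary case lies in the range $2\le k\le n-1$.

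Thus the proof splits into two tasks, the first of which I expect to be the genuine obstacle. (i) One must secure $p\nmid\sigma=e_{n-k}\big((f(a_j)^{s_j})_{j\ne i_0}\big)$ for at least one admissible prime $p$ and simultaneously for every $k$ in the range $2\le k\le n-1$; since $\sigma$ is astronomically large and $p$ is too big to let one compute the residues $f(a_j)^{s_j}\bmod p$ by hand, the plausible routes are to exploit the \emph{abundance} of large prime factors of $\prod_{i\le n}|f(a_i)|$ --- only a controlled number of them should satisfy the bad congruence --- or, for $f$ of special shape, to exploit the arithmetic-progression structure of $\{a_i\}$ following Erd\H{o}s. (ii) When $|I_p|>1$ one must still isolate the $p$-adically dominant monomial of $H_f^{(k)}(\mathcal S_n)$ and of $\bar H_f^{(k)}(\mathcal S_n)$ and show it is not cancelled; this calls for a monotonicity/convexity argument generalizing, uniformly in $k$, the auxiliary function $g$ used in Case~2 of the proof of Theorem~\ref{thm1}, again combined with the non-vanishing modulo $p$ of the complementary symmetric expression. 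Finally one should note that $N$ must be allowed to depend on $f$ (and a priori on $\mathcal S$): for the family $f_0(x)=\prod_{i=1}^{N_0}(x-i)\pm1$ of the Concluding Remarks, every $H_{f_0}^{(k)}(\mathcal S_n)$ and $\bar H_{f_0}^{(k)}(\mathcal S_n)$ with $k\le n\le N_0$ is an integer.
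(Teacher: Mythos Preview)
The statement you are addressing is Conjecture~5.1, which the paper explicitly leaves open: the authors offer no proof, only the remarks that the cases $k=1$ (via Theorem~\ref{thm1}) and $k=n$ are settled, and that the range $2\le k\le n-1$ remains. There is therefore no proof in the paper to compare your proposal against.

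Your write-up is, appropriately, not a claimed proof but a strategy sketch (``Towards a proof''), and as such it is sound. The $p$-adic set-up is the natural one; your treatment of the model case $|I_p|=1$ for $\bar H_f^{(k)}(\mathcal S_n)$ is correct and complete, and your reduction of the elementary case to the non-vanishing condition $p\nmid \sigma=e_{n-k}\big((f(a_j)^{s_j})_{j\ne i_0}\big)$ is exactly the obstruction one meets in the literature you cite. You correctly flag this condition, together with the extension to $|I_p|>1$, as the genuine gaps; neither is resolved here, so the conjecture remains open after your proposal just as it does in the paper.

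One further point worth recording: your parenthetical about the constant case is not merely a technicality but a counterexample to the conjecture \emph{as stated}. The paper allows any nonzero $f\in\mathbb Z[x]$, and for $f\equiv\pm1$ one has $H_f^{(k)}(\mathcal S_n)=\pm\binom{n}{k}\in\mathbb Z$ for every $n$ and $k$, so no $N$ can work. The hypothesis should be strengthened to ``nonconstant'' (or at least $|f|\not\equiv1$) before any proof can succeed.
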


By Theorem 1.1, one knows that Conjecture 5.1 holds when
$k=1$. It is clear that Conjecture 5.1 is true when $k=n$.
Thus we need just to look at the case $2\le k\le n-1$.
Obviously, the results presented in \cite{[CT]},
\cite{[EN]}-\cite{[HW]}, \cite{[LHQW]}-\cite{[YLFJ]}
and Theorem 1.1 of this paper supply
evidences to Conjecture 5.1.

\bibliographystyle{amsplain}

\end{document}